\newcommand{\mbbN}{\mathbb{N}}
\newcommand{\mbbH}{\mathbb{H}}
\newcommand{\eps}{\varepsilon}
\newcommand{\del}{\delta}
\newcommand{\intl}{\int \limits}
\newcommand{\suml}{\sum \limits}
\newcommand{\infl}{\inf \limits}
\newcommand{\supl}{\sup \limits}
\newcommand{\abs}[1]{{\left| #1 \right|}}
\newcommand{\wpart}[1]{{\left[ #1 \right]}}
\DeclareMathOperator{\diam}{diam}
\renewcommand{\le}{\leqslant}
\renewcommand{\ge}{\geqslant}
\theoremstyle{plain}
\newtheorem{thm}{Theorem} 
\newtheorem*{thm*}{Theorem}
\newtheorem*{lm*}{Lemma}
\newtheorem{lm}{Lemma}
\newtheorem{prop}{Proposition}
\theoremstyle{definition}
\newtheorem{defn}{Definition}
\newtheorem{cor}{Corollary}
\theoremstyle{remark}
\newcommand{\lr}[1]{{\left( #1\right)}}
\newcommand{\systemU}{{U_{\mathcal{A}}}}
\newcommand{\systemUt}{{\tilde{U}_{\mathcal{A}}}}
\begin{document}

\title{Scaling Entropy of Unstable Systems}
\author{Georgii Veprev}
\thanks{The work is supported by Ministry of Science and Higher Education of the Russian Federation, agreement №~075-15-2019-1619. The work is also supported by the V. A. Rokhlin scholarship for young mathematicians.}
\date{\today}
\address{Leonhard Euler International Mathematical Institute in St. Petersburg, \newline 14th Line 29B, Vasilyevsky Island, St. Petersburg, 199178, Russia}
\email{egor.veprev@mail.ru}

\maketitle

\begin{abstract}
In this work, we study the slow entropy type invariant of a dynamical system proposed by  A.~M.~Vershik. We provide an explicit construction of a system whose \emph{class of scaling entropy sequences} is empty. For this \emph{unstable} case, we introduce an upgraded notion of the invariant, generalize subadditivity results, and provide a complete series of examples.      
\end{abstract}

\section{Introduction}
The classical notion of Kolmogorov--Sinai entropy is based on the dynamics of measurable partitions of a measure space. For the case of zero entropy systems, A.~M.~Vershik proposed (see~\cite{V1, V3}) a new approach based on the dynamics of functions of several variables.
In topological dynamics, a metric space is usually fixed, and invariant measures can be considered on it. Oppositely to that, we will implement Vershik's approach which is the following. 
We assume that an automorphism of a measure space is given and vary a measurable metric (or semimetric) on the space.
For some metric and sufficiently small $\eps > 0$ we consider the sequence of \emph{$\eps$--entropies} of averaged metrics. This family of sequences increases in $\eps$ and often has a limit when $\eps$ goes to zero.  If this limit exists, we call the system \emph{stable}. The limit itself does not depend on the choice of the metric (see~\cite{Z1}) and is proved to be subadditive (see~\cite{PZ}). Note that similar constructions (\emph{a measure--theoretical complexity}) were considered in~\cite{F} and~\cite{KT}. 

In this work, we show that this limit does not necessarily exist in the classical sense. However, the invariant can be extended to the general case (see section~\ref{sec_invariance}). We also prove its subadditivity and construct a complete family of examples. Now let us proceed to the formal definitions.  


Let $(X, \mu, \rho)$ be a {standard probability} measure space endowed with a measurable semimetric $\rho$ on $X$, meaning that $\rho(x,y)$ is a symmetric non--negative measurable function on $(X^2, \mu^2)$ satisfying the triangle inequality. 
For a given positive $\eps$ define an \textit{$\eps$--entropy} of $(X, \mu, \rho)$ in the following way.
\begin{defn}
	Let $k$ be the minimal positive integer such that $X$ decomposes into a union of measurable sets $X_0, X_1, \ldots, X_k$ with $\mu(X_0) < \eps$ and $\diam_\rho(X_i)  < \eps$ for all $i>0$. Put
		\[
		\mathbb{H}_\eps(X, \mu, \rho) = \log_2 k.
		\]
	If there is no such $k$, put $\mathbb{H}_\eps(X, \mu, \rho) = +\infty$.
\end{defn}

Assume that for some semimetric $\rho$ its $\eps$--entropies are finite for all positive $\eps$. In~\cite{VPZ} it is shown that this property is equivalent to the separability of $\rho$ on the set of full measure. In this case, the semimetric is called \emph{admissible}. 

Now let $T$ be an  invertible measure--preserving transformation of the standard measure space $(X,\mu)$. {For $n\in \mathbb{N}$} denote by $T_{av}^n\rho$ the averaged by $T$ semimetric:
	\[
	T_{av}^n\rho(x, y) = \frac{1}{n} \sum\limits_{k = 0}^{n-1} \rho(T^k x, T^k y), {\quad x,y \in X}.
	\] 
Clearly, if $\rho$ is admissible, then $T_{av}^n\rho$ is admissible too.

Another condition requires the semimetric to be non-trivial.
We call $\rho$ \emph{generating} if there exists a set $X^\prime \subset X$ of full measure such that for any $x, y \in X^\prime$ there is $n \in \mbbN$ with $T_{av}^n\rho(x, y) > 0$. For example, any measurable metric is generating. 

Consider the following function 
$$
\Phi_\rho(n, \eps) = \mbbH_\eps(X, \mu, T_{av}^n\rho), {\quad n \in \mathbb{N}, \eps>0}.
$$  
Note that $\Phi_\rho(n,\eps) < +\infty$ for all $\eps$ and $n$ {provided} $\rho$ is admissible. Generally, the function $\Phi$ depends on $\eps$, $n$, and the semimetric $\rho$. However, its asymptotic behavior in some sense does not depend on $\eps$ and $\rho$, i.\,e. it forms the isomorphism invariant of the dynamical system. The following result is proved in~\cite{Z1}.    
\begin{thm*}[Zatitskiy, 2015]
	Let $T$ be an automorphism of the standard measure space $(X, \mu)$.  Suppose that for some admissible generating semimetric $\rho$ on $(X,\mu)$ there is a sequence $\{h_n\}$ such that for all sufficiently small $\eps > 0$
	\[
	\Phi_\rho(n, \eps) \asymp h_n.
	\]  
	Then for any admissible {generating semimetric} $\omega$ and any $\eps$ small enough
	\[
	\Phi_\omega(n, \eps) \asymp h_n.
	\] 
\end{thm*}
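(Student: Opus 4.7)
The plan is to establish a two--sided comparison between any two admissible generating semimetrics. Specifically, I would show that for any admissible generating $\rho$ and $\omega$ and each sufficiently small $\eps>0$ there exist $\eps'>0$ and $N\in\mbbN$ (depending on $\rho,\omega,\eps$ only) such that
$$
\Phi_\omega(n,\eps)\le \Phi_\rho(n+N,\eps')
$$
for all sufficiently large $n$. Applied symmetrically, together with the hypothesis $\Phi_\rho(n,\eps)\asymp h_n$, this gives $\Phi_\omega(n,\eps)\asymp h_n$ and hence the conclusion.

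The principal tool is an approximation lemma: for any admissible generating $\rho$, any admissible $\omega$, and any $\delta>0$, there exist $N=N(\delta)\in\mbbN$, $K=K(\delta)>0$, and a measurable set $E\subset X\times X$ with $(\mu\otimes\mu)(E)>1-\delta$ such that $\omega(x,y)\le K\cdot T_{av}^N\rho(x,y)+\delta$ on $E$. I would prove this in two stages. First, truncate $\omega$ at some large height $M$; since $\omega$ is admissible, the truncation differs from $\omega$ only on a set of arbitrarily small $\mu\otimes\mu$--measure. Second, exploit the generating property of $\rho$: outside the diagonal, the averages $T_{av}^N\rho(x,y)$ are eventually positive for a.e.\ pair, so by an Egorov--type argument one finds a large set on which $T_{av}^N\rho$ is bounded below uniformly in $(x,y)$ for all $N\ge N_0$. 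A Chebyshev/Lusin argument on $X\times X$ then yields constants $K,N$ making $K\cdot T_{av}^N\rho+\delta$ dominate the bounded truncation of $\omega$ on a set of measure $>1-\delta$.

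The next step lifts the pointwise control to the averaged semimetrics. By $T$--invariance, $(\mu\otimes\mu)((T\times T)^{-k}E)=(\mu\otimes\mu)(E)>1-\delta$ for each $k$. Applying Chebyshev to the average $\frac{1}{n}\sum_{k=0}^{n-1}\mathbf{1}_{(T\times T)^{-k}E}$, one obtains a set of measure $>1-\sqrt\delta$ on which $(T^kx,T^ky)\in E$ for at least a $(1-\sqrt\delta)$--fraction of $k<n$. Combining the bound on good $k$ with the truncation $M$ on bad $k$, and rewriting the double average via
$$
\tfrac{1}{n}\suml_{k=0}^{n-1} T_{av}^N\rho(T^kx,T^ky)=T_{av}^{n+N-1}\rho(x,y)+O\!\lr{\tfrac{N}{n}},
$$
we get $T_{av}^n\omega\le K\,T_{av}^{n+N-1}\rho+(C M\sqrt\delta+\delta)$ on a set of measure $>1-\sqrt\delta$. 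A partition $X=X_0\sqcup\cdots\sqcup X_k$ realizing $\Phi_\rho(n+N-1,\eps')$ with $\eps'=\eps/(2K)$ then realizes $\Phi_\omega(n,\eps)$ after merging the exceptional pairs into $X_0$, provided $\sqrt\delta,C M\sqrt\delta+\delta<\eps/2$; these constraints are met by choosing $\delta$ small, $M,K,N$ in turn. Swapping the roles of $\rho$ and $\omega$ gives the matching bound.

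The main obstacle is the approximation lemma together with the control of the exceptional set after averaging: a naive union bound over $k<n$ would blow up the bad set by a factor of $n$ and ruin the argument. The essential idea is to average rather than union, using $T$--invariance to keep the density of bad times uniformly small, and to use the truncation height $M$ to absorb the contribution from those bad times. Balancing the four parameters $\delta,M,K,N$ against the target $\eps$, while ensuring $N$ is a fixed constant independent of $n$ (so that the shift $n\mapsto n+N$ is harmless for the $\asymp$ equivalence), is where the actual work of the argument lies.
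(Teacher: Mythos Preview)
This theorem is not proved in the present paper; it is quoted from~\cite{Z1} and used as input. Hence there is no ``paper's own proof'' to compare against here, and I can only assess your sketch on its own merits and against the shape of Zatitskiy's actual argument.

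Your overall plan --- an approximation inequality on $X\times X$, averaged along the orbit, followed by an $\eps$-entropy comparison --- is the right shape, and the approximation lemma and the averaging step are essentially sound (the rewriting of the double average and the Chebyshev control of bad times are standard). The gap is in the final step. You obtain
\[
T_{av}^n\omega(x,y)\le K\,T_{av}^{n+N-1}\rho(x,y)+(\text{small})
\]
only for pairs $(x,y)$ outside an exceptional set $B_n\subset X\times X$ with $(\mu\otimes\mu)(B_n)<\sqrt{\delta}$, and then assert that a partition $X=X_0\sqcup\cdots\sqcup X_k$ realizing $\Phi_\rho(n{+}N{-}1,\eps')$ ``realizes $\Phi_\omega(n,\eps)$ after merging the exceptional pairs into $X_0$.'' But $X_0\subset X$, while $B_n\subset X\times X$: a small set of bad \emph{pairs} does not yield a small set of bad \emph{points}. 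Inside any cell $X_i$ there may remain pairs $(x,y)\in B_n$, and for those the $\omega$-diameter bound fails. The naive Fubini reduction --- remove $\{x:\mu((B_n)_x)>\tau\}$ and then, cell by cell, remove the bad fibre of a chosen center --- leaves a residual set of measure controlled only by $k\tau$, with $k=2^{\Phi_\rho(n+N-1,\eps')}$ growing in $n$ while your $\delta$ (hence $\tau$) is fixed. This is exactly where the work in~\cite{Z1} (and~\cite{VPZ}) sits: one needs a stability lemma for $\mbbH_\eps$ under perturbations of the semimetric that are small on a set of large $\mu\otimes\mu$-measure, whose proof uses a triangle-inequality ``third point'' argument inside each cell together with a careful treatment of cells of small measure. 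You should either reproduce that lemma or cite it; as written, the passage from the pairwise estimate to the entropy estimate is not justified.
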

{Here and in what follows, for two sequences $\phi(n)$ and $\psi(n)$, relation $\phi \asymp \psi$ means that there exist two positive constants $c$ and $C$ with $c\phi(n) \le \psi(n) \le C \phi(n)$ for all $n \in \mbbN$.} 
\begin{defn}
	In this case, we call the sequence $h_n$ \emph{a scaling entropy sequence} of $(X, \mu, T)$. The system itself we call \emph{stable}.	
\end{defn}
    
Some important properties of the system can be described in terms of its scaling sequence. For example, if Kolmogorov--Sinai entropy is positive, then one could choose $h_n = n$ as a scaling entropy sequence of the system. In~\cite{VPZ} it is shown that the scaling sequence is bounded if and only if the automorphism has a pure point spectrum. In~\cite{PZ} F.~V.~Petrov and P.~B.~Zatitskiy proved that {if a scaling sequence exists, it can be chosen to be} increasing and subadditive. Conversely, there exists a stable ergodic system with a given increasing subadditive scaling entropy sequence, i.\,e., the complete classification of possible scaling sequences was obtained in the \emph{stable case}.    
    
Until now it was unclear whether or not unstable systems exist. In this work, we give a positive answer to this question and generalize the scaling entropy sequence invariant for the general case. Also, we construct an exhaustive family of examples.

\subsection*{Acknowledgments}
The author would like to thank Anatoly Vershik for his attention
to this work. The author is also grateful to his supervisor Pavel Zatitskiy for many helpful discussions.

\section{Construction of an unstable system}\label{secEx}
\begin{thm}
	There exists an ergodic system $(X, \mu, T)$ and an admissible semimetric $\rho$ on $X$ such that the asymptotic behavior of $\mbbH_\eps(X, \mu, T_{av}^n \rho)$ essentially depends on~$\eps$, meaning that for any $\eps > 0$ there exists $\delta$, $\eps > \del > 0$, with 
	\[
	\mbbH_{\eps}(X, \mu, T_{av}^n \rho) \lnsim 
	\mbbH_{\del}(X, \mu, T_{av}^n \rho).
	\]   
\end{thm}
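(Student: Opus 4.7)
The plan is to build an unstable system as an infinite product of systems whose scaling entropy sequences grow progressively faster, and attach coefficients so that at each scale $\eps$ only finitely many factors are visible.

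First I would fix a sequence of ergodic automorphisms $(X_k, \mu_k, T_k)$ each carrying an admissible generating semimetric $\rho_k$ of diameter $\le 1$, with a scaling entropy sequence $h_n^{(k)}$ chosen so that $h_n^{(k)}/h_n^{(k-1)} \to \infty$. The construction from~\cite{PZ} of stable systems with prescribed subadditive scaling sequences supplies such examples (for instance one can take $h_n^{(k)} = \lfloor (\log_2(n+1))^k \rfloor$). To guarantee ergodicity of the eventual infinite product, I would insist that each $T_k$ be weakly mixing, which is compatible with those constructions.

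Next I would form $(X,\mu,T) = \prod_{k=1}^\infty (X_k,\mu_k,T_k)$ with the coordinatewise action, and set
\[
\rho(x,y) = \suml_{k=1}^\infty c_k\, \rho_k(x_k, y_k),
\]
where $c_k > 0$ is a rapidly decreasing sequence with $\sum c_k < \infty$ (say $c_k = 4^{-k}$). A direct check, of the type already used in~\cite{VPZ}, shows that $\rho$ is admissible on $(X,\mu)$: the tail $\sum_{k>K} c_k \rho_k$ has expectation bounded by $\sum_{k>K} c_k$, and a finite product of admissible semimetrics is admissible. Averaging commutes with the infinite sum, so $T^n_{av}\rho(x,y) = \sum_k c_k (T_k)^n_{av}\rho_k(x_k,y_k)$.

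The heart of the argument is a two-sided estimate for $\Phi_\rho(n,\eps)$ when $\eps$ is tuned to a scale $c_K$. For the upper bound I would truncate: since the tail contribution to $T_{av}^n\rho$ is at most $\sum_{k>K} c_k$ with probability close to $1$, covering each of the first $K$ factors by $(T_k)^n_{av}\rho_k$-balls of radius $\eps/(Kc_k)$ yields a covering of $X$ by at most $\prod_{k\le K} 2^{\Phi_{\rho_k}(n,\,\eps/(Kc_k))}$ sets of $T^n_{av}\rho$-diameter less than $\eps$, up to a set of measure less than $\eps$. For the lower bound I would push the $\eps$-entropy down to the $K$-th factor by projecting: if $\eps < c_K/2$ (say), then a partition that $\eps$-covers $X$ in $T^n_{av}\rho$ induces an $(\eps/c_K)$-cover of $X_K$ in $(T_K)^n_{av}\rho_K$, so $\Phi_\rho(n,\eps) \ge \Phi_{\rho_K}(n, \eps/c_K)$. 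Combining these, for an appropriate choice of $\eps_K$ one gets $\Phi_\rho(n,\eps_K) \asymp \max_{k\le K} h_n^{(k)} = h_n^{(K)}$.

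By the hypothesis $h_n^{(K)}/h_n^{(K-1)} \to \infty$, the resulting family $\{h_n^{(K)}\}_K$ is strictly increasing in rate, so $\Phi_\rho(n,\eps_{K-1}) \lnsim \Phi_\rho(n,\eps_K)$ for every $K$; given any $\eps > 0$ one picks $K$ with $\eps_{K-1} < \eps$ and $\delta = \eps_K$, which proves the theorem. The main obstacle I anticipate is the two-sided comparison between $\Phi_\rho(n,\eps)$ and the factor entropies $\Phi_{\rho_k}(n,\cdot)$: one has to be careful that the $\eps$-small exceptional set of measure $<\eps$ appearing in the definition of $\mathbb{H}_\eps$ does not absorb too much of the fine-scale information, and that the weakly mixing hypothesis really does transfer to ergodicity and to admissibility of the generating semimetric on the infinite product.
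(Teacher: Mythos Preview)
Your approach is essentially the paper's: take an infinite product of stable ergodic systems whose scaling sequences strictly increase (the paper also suggests $\phi_m(n)=\log^m n$), equip it with the weighted semimetric $\sum 2^{-m}\rho_m$, bound $\mathbb{H}_\eps$ from above by truncating the tail (this is exactly the paper's Lemma~1), and bound it from below using that each coordinate is a factor. The paper phrases the conclusion as a contradiction (assume stability, get $h_n\lesssim\phi_{R(\eps_0)}$ from the upper bound and $h_n\gtrsim\phi_m$ for every $m$ from the factor bound), but that is only cosmetically different from your direct comparison of $\Phi_\rho(n,\eps_K)$ with $h_n^{(K)}$.

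The one substantive divergence is how ergodicity of the product is secured. You ask that each $T_k$ be weakly mixing so that the product measure itself is ergodic. The paper does \emph{not} do this: it forms the ordinary product measure, applies the ergodic decomposition, and picks an ergodic component $\mu$ whose coordinate projections are still the $\mu_m$ (this holds for a.e.\ component because each $T_m$ is ergodic, so any $T_m$-invariant pushforward must equal $\mu_m$). That trick is worth noting: it removes any hypothesis on the factor systems beyond ergodicity. Your route, by contrast, leans on an unverified claim---that the prescribed-scaling constructions (the correct reference is~\cite{Z2}, not~\cite{PZ}) can be taken weakly mixing---which is plausible but not supplied by the cited literature and would need a separate argument. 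Apart from this point your proposal is correct and matches the paper.
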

Here, $\phi \lesssim \psi$ means that there is a positive constant $C$ such that $\phi(n) < C\psi(n)$ for all $n \in \mbbN$. We write $\phi \lnsim \psi$ if $\phi \lesssim \psi$ but not $\phi \asymp \psi$.
\begin{proof}
    Let $\mathcal{A} = \{\phi_m\}_{m=1}^\infty$ be an infinite family of subadditive functions {on $\mathbb{N}$} such that $\phi_m \lnsim \phi_{m+1}$. One could choose $\phi_m(n) = \log ^ m (n)$, for instance. By~\cite{Z2}, there is a family of corresponding ergodic systems $S_m = (X_m, \mu_m, T_m)$ such that for all $m$, any admissible generating semimetric $\rho_m$ on $(X_m, \mu_m)$, and all positive $\eps$ small enough
    \[
    \mbbH_\eps(X_m, \mu_m, T_{av}^n\rho_{{m}}) \asymp \phi_m(n).
    \]
    {For each $m \in \mbbN$ fix an admissible generating semimetric $\rho_m \le 1$ on $(X_m, \mu_m)$. Then define a system~$\systemUt$ as a product of  $S_m$:}
    \[
    \systemUt = \left(
    \prod_{m=1}^\infty X_m, 
    \prod_{m=1}^\infty \mu_m, 
    \prod_{m=1}^\infty T_m \right),
    \]
    where the automorhism $\prod_{m=1}^\infty T_m$ acts independently on each factor.  Now, by the ergodic decomposition theorem, there exists an ergodic measure $\mu$ on $\prod_{m=1}^\infty X_m$ such that all its coordinate projections coincide with the initial measures $\mu_m$. Let us change the measure to obtain an ergodic system 
    \[
    \systemU = \left(
    \prod_{m=1}^\infty X_m, 
    \mu, 
    \prod_{m=1}^\infty T_m \right).
    \]
    Define a semimetric $\rho$ on the product space such that for all $x, y \in \prod_{m=1}^\infty X_m$
    \[
    \rho(x,y) = \suml_{m=1}^{\infty} \frac{1}{2^m}\rho_m(x_m,y_m).
    \]
    Clearly, $\rho$ is generating and as we will show below, all its $\eps$--entropies are finite for all $\eps$. Therefore, $\rho$ is admissible and generating. 
    \begin{lm}\label{lmEx}
        Let $\systemU$ and $\rho$ be the product system with the semimetric described above. Then 
        \[
         \mbbH_\eps(\systemU, T_{av}^n\rho) \le \suml_{m=1}^{R(\eps)} \mbbH_{\frac{\eps}{2R(\eps)}}(X_m, \mu_m, \left(T_m\right)_{av}^n\rho_m), \quad {\eps>0}, 
        \]
        where $R(\eps) = -\log(\eps)$.
    \end{lm}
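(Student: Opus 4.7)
My plan is to construct an efficient $\eps$-partition of the product space by combining optimal $(\eps/(2R))$-partitions of the first $R = R(\eps)$ factors and treating the tail as a single uniformly bounded remainder. The key algebraic fact is that averaging commutes with the weighted sum defining $\rho$, so
\[
T_{av}^n\rho(x, y) = \suml_{m=1}^{\infty} \frac{1}{2^m}(T_m)_{av}^n\rho_m(x_m, y_m),
\]
and since each $\rho_m \le 1$ we have $(T_m)_{av}^n\rho_m \le 1$; hence the tail $\suml_{m>R} 2^{-m}(T_m)_{av}^n\rho_m$ is bounded pointwise by $2^{-R}$, which is of order $\eps$ for $R = -\log \eps$.

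For each $m \in \{1, \dots, R\}$ I would fix a measurable partition $X_m = X_m^0 \sqcup X_m^1 \sqcup \cdots \sqcup X_m^{k_m}$ realizing $\mbbH_{\eps/(2R)}(X_m, \mu_m, (T_m)_{av}^n\rho_m)$, so that $\mu_m(X_m^0) < \eps/(2R)$ and $\diam_{(T_m)_{av}^n\rho_m}(X_m^i) < \eps/(2R)$ for each $i \ge 1$. Writing $\pi_m$ for the projection to the $m$-th factor, I then declare the exceptional cell of $\prod_{m} X_m$ to be $X^0 = \bigcup_{m=1}^{R} \pi_m^{-1}(X_m^0)$, and the regular cells to be $\bigcap_{m=1}^{R} \pi_m^{-1}(X_m^{i_m})$ over all tuples $(i_1, \dots, i_R)$ with every $i_m \ge 1$. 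Two bookkeeping estimates then finish the argument. Since the coordinate projections of the ergodic measure $\mu$ coincide with the $\mu_m$, a union bound gives $\mu(X^0) \le \suml_{m=1}^R \mu_m(X_m^0) < \eps/2 < \eps$. For the diameter of a regular cell under $T_{av}^n\rho$, two points in the same cell satisfy $(T_m)_{av}^n\rho_m(x_m,y_m) < \eps/(2R)$ for every $m \le R$, so splitting the infinite sum and using $\suml_{m\ge 1} 2^{-m} = 1$ bounds the total by $\eps/(2R) + 2^{-R}$, which stays below $\eps$ for the stated $R(\eps)$. The number of regular cells is $\prod_{m=1}^{R} k_m$, so the base-$2$ logarithm of the total number of cells is $\suml_{m=1}^{R} \log_2 k_m$, which is exactly the right-hand side of the inequality.

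The only genuine obstacle, and the step I would double-check most carefully, is the allocation of the ``budget'' $\eps$ across three sources of error: the measure of the exceptional set, the weighted contribution of the first $R$ factors, and the tail bound $2^{-R}$. Matching $R(\eps)$ to $-\log \eps$ forces the inner parameter on the right-hand side to scale like $\eps/R$ rather than $\eps$, which is precisely why the individual summands are evaluated at the smaller scale $\eps/(2R(\eps))$; this quantitative loss is the mechanism by which the invariant can genuinely depend on $\eps$ in the product construction.
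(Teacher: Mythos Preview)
Your proof is essentially identical to the paper's: you build the same product partition from optimal $\tfrac{\eps}{2R}$-partitions of the first $R$ factors, take the same exceptional set $\bigcup_{m\le R}\pi_m^{-1}(X_m^0)$ with the same union bound, and control the diameter of each regular cell by the same head-plus-geometric-tail split of $T_{av}^n\rho$. The only cosmetic difference is that you bound the head contribution by $\bigl(\sum_{m\ge1}2^{-m}\bigr)\cdot\tfrac{\eps}{2R}<\tfrac{\eps}{2R}$, whereas the paper uses the cruder estimate $R\cdot\tfrac{\eps}{2R}=\tfrac{\eps}{2}$.
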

    \begin{proof}
        Let us fix some $n \in \mbbN$ and $\eps > 0$. For each $X_m$ consider its decomposition into subsets $A^{(m)}_0, \ldots, A^{(m)}_{k_m}$ such that $\mu_m\left(A^{(m)}_0\right) < \frac{\eps}{2R(\eps)}$ and $\diam_{T^n_{av}\rho_m}\left(A^{(m)}_i\right) < \frac{\eps}{2R(\eps)}$ for $i > 0$. Here 
        $$\log k_m =\mbbH_{\frac{\eps}{2R(\eps)}}(X_m, \mu_m, \left(T_m\right)_{av}^n\rho_m).$$ 
        Let $\pi_m$ be a standard projector onto $X_m$. Construction of $\mu$ implies that $\pi_m$ is measure--preserving. Denote 
        \[
        \hat A^{(m)}_i = \pi_m^{-1}\left( A^{(m)}_i\right).  
        \]
        Define a new error set 
        $
        K_0 = \bigcup\limits_{m = 1}^{R} \hat A^{(m)}_0,
        $
         { where $R = R(\eps) = -\log(\eps)$.}
        Clearly,
        \[
        \mu\lr{K_0} \le \suml_{m = 1}^{R} \hat A^{(m)}_0 \le \frac{\eps}{2}.
        \]
        For every $J = (j_1, \ldots, j_{R})$, where $j_m$ lies in $\{1, \ldots, k_m\}$, define 
        \[
        K_J = \bigcap\limits_{m = 1}^{R} \hat A^{(m)}_{j_m} \setminus K_0.
        \]
        Note that
        \[
        T^n_{av}\rho(x,y) = \suml_{m=1}^{\infty} \frac{1}{2^m}T^n_{av}\rho_m(x,y).
        \]
        Therefore, 
        \begin{multline*}
            \diam_{T^n_{av}\rho}\lr{K_J} \le 
            \suml_{m=1}^{\infty} \frac{1}{2^m} \diam_{T^n_{av}\rho_m}\pi_m\lr{K_J} \le \\
            \suml_{m=1}^{R} \frac{1}{2^m} \diam_{T^n_{av}\rho_m}A_{j_m}^{(m)}  + 
            \suml_{m=R+1}^{\infty} \frac{1}{2^m} \diam_{T^n_{av}\rho_m}X_m  \le \\
            R \cdot \frac{\eps}{2R} + 2^{-R -1} < 
            \frac{\eps}{2} + \frac{\eps}{2} = \eps.
        \end{multline*}
        So, we have constructed a partition $\mathcal{K} = \{K_J\}_{J} \cup \{K_0\}$ such that $\mu\lr{K_0} < \eps$ and $\diam_{T^n_{av}\rho}K_J < \eps$. The cardinality of  $\mathcal{K}$ does not exceed $k_1 \cdot k_2\cdot\ldots\cdot k_R + 1$. Thus,
        \[
        \mbbH_\eps(\systemU, T_{av}^n\rho) \le \log\lr{\abs{\mathcal{K}} - 1} \le 
        \suml_{m=1}^{R(\eps)} k_m = 
        \suml_{m=1}^{R(\eps)} \mbbH_{\frac{\eps}{2R(\eps)}}(X_m, \mu_m, \left(T_m\right)_{av}^n\rho_m),
        \]
        as desired. 
    \end{proof}
    
    Now, assume that $\systemU$ is stable, i.\,e., there exists some $\eps_0 > 0$ such that for any $\eps \le \eps_0$ the following equivalence holds:
    \[
    \mbbH_{\eps}\lr{X, \mu, T_{av}^n \rho} \asymp \mbbH_{\eps_0}\lr{X, \mu, T_{av}^n \rho}.
    \]
    Since, by Lemma~\ref{lmEx} one has 
    \begin{equation}\label{thEx_eq}
        \mbbH_{\eps}\lr{X, \mu, T_{av}^n \rho} \lesssim \suml_{k=1}^{R(\eps_0)} \mbbH_{\frac{\eps_0}{2R(\eps_0)}}(X_k, \mu_k, \left(T_k\right)_{av}^n) \lesssim \phi_{R(\eps_0)}\lr{n}.
    \end{equation}
    Let $h_n$ be a scaling entropy sequence of the system. By the previous formula, $h_n$ does not exceed $\phi_{R(\eps_0)}(n)$ asymptotically. However, in~\cite{Z2} it is proved that the scaling entropy sequence of a system grows not slower than the entropy sequence of a factor system. This implies that for any $m$
    \[
    h_n \gtrsim \phi_m(n).
    \]
    One could, for instance, choose $m = R(\eps_0) + 1$ and obtain a contradiction. Therefore, our assumption is false, and the system $\systemU$ is not stable.  
\end{proof}

\section{Invariance}\label{sec_invariance}
The purpose of this section is to generalize the notion of the  scaling entropy sequence invariant for the general (unstable) case. In this instance, it will be some equivalence class of functions of two variables. Let us define the equivalence relation.
\begin{defn}\label{defRel}
    Let {$\Phi, \Psi \colon \mathbb{N} \times \mathbb{R}_+ \to \mathbb{R}_+$} be two functions that decrease {with respect to their second arguments}. We will write that $\Phi \preceq \Psi$ if for any $\eps > 0$ there is some $\del > 0$ such that
    \[
    \Phi(n, \eps) \lesssim \Psi(n, \delta).
    \]
    We will call $\Phi$ and $\Psi$ equivalent if $\Phi \preceq \Psi$ and $\Psi \preceq \Phi$. {In this case, we will write $\Phi \sim \Psi$.}
\end{defn}
Clearly, relation $\preceq$ forms a partial order on the set of equivalence classes. We will denote by $\wpart{\Phi}$ the class of function $\Phi$.

Assume that a measure preserving system $\lr{X, \mu, T}$ with an admissible semimetric $\rho$ is given. 
Let us define a \emph{scaling entropy of the system $\lr{X, \mu, T}$ with respect to semimetric $\rho$} as the following equivalence class:  
\[
\mathcal{H}\lr{X, \mu, T, \rho} = \Big[\mbbH_\eps\lr{X, \mu, T_{av}^n \rho}\Big].
\]

In fact, this class does not depend on the generating admissible semimetric. Moreover, in~\cite{Z1} the following theorem is proved.
\begin{thm*}[Zatitskiy, 2015]
    Let $(X,\mu, T)$ be a measure preserving system. Assume that $\rho$ is an admissible generating semimetric with a finite integral over $(X^2, \mu^2)$. Then for any admissible semimetric $\omega$ with a finite integral and any $\eps > 0$ there exist some positive $c$ and $\del$ such that
    \[
    \mbbH_\eps\lr{X,\mu, T_{av}^n \omega} \le c\mbbH_\del\lr{X,\mu, T_{av}^n \rho}.
    \]
\end{thm*}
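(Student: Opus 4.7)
The plan is an approximation-and-transfer argument: first replace $\omega$ by a simpler semimetric controlled by a finite measurable partition, then use the generating property of $\rho$ (in its time-averaged form) to show that this partition is captured by $T_{av}^{N_0}\rho$-small sets for some fixed $N_0$, and finally pass from $T_{av}^n\rho$-covers to $T_{av}^n\omega$-covers via measure-preservation of $T$ and a sliding-window Markov estimate.

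Concretely, I would fix $\eps>0$ and pick a small auxiliary $\eta$. Admissibility of $\omega$ combined with $\int\omega\,d\mu^2<\infty$ yields a finite measurable partition $\xi=\{A_0,A_1,\dots,A_L\}$ with $\mu(A_0)<\eta$, $\diam_\omega(A_i)<\eta$ for $i\ge 1$, and, by absolute continuity of the integral, $\int_{A_0\times X}\omega\,d\mu^2<\eta$. I would then introduce the truncated partition-valued semimetric $\omega'(x,y)=\min(\omega(p_i,p_j),M)$ for fixed representatives $p_i\in A_i$, so that $\omega\le \omega'+2\eta+(\omega-M)_+\mathbf{1}_{(X\setminus A_0)^2}+\omega\cdot\mathbf{1}_{(A_0\times X)\cup(X\times A_0)}$. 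A subadditivity estimate for $\mbbH_\eps$ together with Chebyshev controls on the last two $L^1$-small terms reduces matters to bounding $\mbbH_{\eps/4}(X,\mu,T_{av}^n\omega')$. Next, the admissibility-plus-generating property of $\rho$ (a standard fact from~\cite{Z1, VPZ}) supplies a window length $N_0$, threshold $\delta_0>0$, and exceptional set $E$ with $\mu(E)<\eta$ such that $T_{av}^{N_0}\rho(x,y)<\delta_0$ and $x,y\notin E$ force $x$ and $y$ into the same atom of $\xi$. Now, given an optimal $\delta$-partition $X=C_0\sqcup C_1\sqcup\dots\sqcup C_k$ realising $\mbbH_\delta(T_{av}^n\rho)$ with $\delta=\alpha\delta_0$ for a small fixed $\alpha$, set
\[
X_{\mathrm{good}} = \Bigl\{x\in X : \#\{0\le j<n-N_0 : T^j x\in E\cup A_0\} < \alpha n\Bigr\},
\]
so that $\mu(X\setminus X_{\mathrm{good}})\le 2\eta/\alpha$ by measure-preservation of $T$ and Markov. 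For $x,y\in C_i\cap X_{\mathrm{good}}$ with $i\ge 1$ and $n\gg N_0$, the telescoping identity $\frac{1}{n-N_0}\sum_{j}T_{av}^{N_0}\rho(T^j x,T^j y)\le \frac{n}{n-N_0}T_{av}^n\rho(x,y)<2\delta$, combined with Markov, shows $T_{av}^{N_0}\rho(T^j x,T^j y)<\delta_0$ for all but an $O(\alpha)$-fraction of $j$. Intersecting with the $(1-2\alpha)$-fraction of $j$ for which $T^j x,T^j y\notin E\cup A_0$, one forces $\omega'(T^j x,T^j y)=0$ on a $(1-O(\alpha))$-fraction of indices; on the rest $\omega'\le M$. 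Hence $T_{av}^n\omega'(x,y)=O(\alpha M)$, which is below $\eps/4$ if $\alpha$ is chosen small enough in terms of $M$ and $\eps$. Replacing $C_0$ by $C_0\cup(X\setminus X_{\mathrm{good}})$ produces an $\eps$-partition for $T_{av}^n\omega$ of cardinality $k+1$, yielding $\mbbH_\eps(T_{av}^n\omega)\le c\,\mbbH_\delta(T_{av}^n\rho)$.

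The hard part will be the uniform-in-$n$ balance of the parameters $\eta,\alpha,M,\delta_0,\delta,N_0$ so that all exceptional contributions---the measure of $X\setminus X_{\mathrm{good}}$, the $L^1$-tails $(\omega-M)_+$ and $\omega\mathbf{1}_{A_0\times X}$, the exceptional fraction of iterates, and the $O(N_0/n)$ boundary effects in the sliding-window estimate---remain below $\eps$ for every $n$ simultaneously. Measure-preservation of $T$ is precisely what delivers this uniformity, since every relevant orbit-wise quantity is controlled by its static analogue. Small values of $n$ (where $n\lesssim N_0$) are handled separately by absorbing $\mbbH_\eps(T_{av}^n\omega)$, which is then bounded by a constant, into $c$, and once all parameters are fixed as functions of $\eps$, $\omega$, and $\rho$ alone, both $c$ and $\delta$ become independent of $n$ as required.
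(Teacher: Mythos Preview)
The paper does not actually prove this theorem: it is quoted verbatim as a result of Zatitskiy~\cite{Z1} and used as a black box to derive the invariance corollary. There is therefore no ``paper's own proof'' to compare against.

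That said, your outline is essentially the argument that appears in~\cite{Z1}. The three moves you describe --- (i) replace $\omega$ by a partition semimetric via admissibility and finite integral, (ii) use the generating property of $\rho$ to approximate that partition by $T_{av}^{N_0}\rho$-balls up to a small exceptional set, and (iii) transfer an optimal $\delta$-cover for $T_{av}^n\rho$ to an $\eps$-cover for $T_{av}^n\omega$ via the sliding-window identity and Markov --- are exactly the ingredients of the original proof. Your telescoping bound $\frac{1}{n-N_0}\sum_{j}T_{av}^{N_0}\rho(T^jx,T^jy)\le\frac{n}{n-N_0}T_{av}^n\rho(x,y)$ is correct, and the parameter bookkeeping you flag (uniformity in $n$ via measure-preservation, absorbing small $n$ into $c$) is the right way to close the argument.

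One cosmetic point: your $\omega'(x,y)=\min(\omega(p_i,p_j),M)$ need not satisfy the triangle inequality, so it is not literally a semimetric. This does not matter for the entropy bound you need, but it would be cleaner to reduce to the cut semimetric $\mathbf{1}[\xi(x)\neq\xi(y)]$ associated to the partition $\xi$, as in~\cite{VPZ,Z1}; the rest of your argument then goes through unchanged with $M=1$ and without the separate $(\omega-M)_+$ tail term.
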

\begin{cor}[Invariance]
    Let $\rho$ and $\omega$ be two admissible generating semimetrics with finite integrals. Then
    \[
    \mathcal{H}\lr{X, \mu, T, \rho} = \mathcal{H}\lr{X, \mu, T, \omega}.
    \]
\end{cor}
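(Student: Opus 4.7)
The plan is to observe that this corollary is an essentially immediate consequence of the Zatitskiy theorem quoted just above it, combined with a symmetry argument. Unpacking Definition~\ref{defRel}, showing $\mathcal{H}(X,\mu,T,\rho) = \mathcal{H}(X,\mu,T,\omega)$ amounts to proving both
\[
\Phi_\rho \preceq \Phi_\omega \quad \text{and} \quad \Phi_\omega \preceq \Phi_\rho,
\]
where $\Phi_\rho(n,\eps) := \mbbH_\eps(X,\mu,T_{av}^n\rho)$ and similarly for $\omega$. Each of these is an ``$\eps$-to-$\del$'' comparison that holds with a multiplicative constant, which is exactly the format of the theorem's conclusion.

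First, I would invoke the quoted theorem with $\rho$ playing the role of the admissible generating semimetric and $\omega$ playing the role of the admissible semimetric (we are given that both have finite integrals). Fixing an arbitrary $\eps > 0$, this yields constants $c = c(\eps) > 0$ and $\del = \del(\eps) > 0$ such that
\[
\mbbH_\eps(X, \mu, T_{av}^n \omega) \le c\, \mbbH_\del(X, \mu, T_{av}^n \rho) \quad \text{for all } n \in \mbbN.
\]
Rewritten, this says $\Phi_\omega(n,\eps) \lesssim \Phi_\rho(n,\del)$, which is exactly the condition in Definition~\ref{defRel} certifying $\Phi_\omega \preceq \Phi_\rho$.

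Next I would exchange the roles of $\rho$ and $\omega$. The key point is that the hypothesis of the corollary is symmetric: both semimetrics are admissible, generating, and have finite integrals. Hence the same theorem applies with $\omega$ as the admissible generating semimetric and $\rho$ as the other admissible semimetric, producing analogous constants and giving $\Phi_\rho \preceq \Phi_\omega$. Combining the two directions yields $\Phi_\rho \sim \Phi_\omega$, so $\wpart{\Phi_\rho} = \wpart{\Phi_\omega}$, i.e.\ $\mathcal{H}(X,\mu,T,\rho) = \mathcal{H}(X,\mu,T,\omega)$, as required.

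There is essentially no obstacle here beyond bookkeeping; the only point worth checking carefully is that both applications of the theorem are legitimate, i.e.\ that in each direction the semimetric playing the role of the ``reference'' one is genuinely admissible and generating (which is exactly the symmetric hypothesis), and that the multiplicative constant $c$ appearing in the theorem is absorbed by the $\lesssim$ in Definition~\ref{defRel}.
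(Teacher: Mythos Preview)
Your proposal is correct and matches the paper's approach: the corollary is stated without proof as an immediate consequence of the preceding Zatitskiy theorem, and your argument---applying that theorem in both directions using the symmetric hypotheses and then unpacking Definition~\ref{defRel}---is exactly the intended justification.
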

This corollary allows us to give the following definition.
\begin{defn}
    The \emph{scaling entropy of the system} $\lr{X, \mu, T}$ is the following class
    \[
    \mathcal{H}\lr{X, \mu, T} = \mathcal{H}\lr{X, \mu, T, \rho},
    \]
    where $\rho$ is an arbitrary admissible generating semimetric with a finite integral. 
\end{defn}
An important example of such a semimetric is {a cut} semimetric corresponding to a (countable) generating partition.
Another useful corollary gives an estimate of the scaling entropy of a factor system.
\begin{cor}\label{remFactor}
    Let a system $\lr{\hat X, \hat \mu, \hat T}$ be a factor of a measure--preserving system $\lr{X, \mu, T}$. Then
    \[
    \mathcal{H}\lr{\hat X, \hat \mu, \hat T} \preceq \mathcal{H}\lr{X, \mu, T}.
    \]
\end{cor}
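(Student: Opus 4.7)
The plan is to reduce to the theorem of Zatitskiy already cited in this section by pulling back a semimetric from the factor. Let $\pi \colon X \to \hat X$ denote the factor map; it is measure-preserving and satisfies $\pi \circ T = \hat T \circ \pi$. I would fix some admissible generating semimetric $\hat\rho$ on $\hat X$ with finite integral (for instance, the cut semimetric of a countable generating partition) and consider its pullback $\omega(x,y) := \hat\rho(\pi x, \pi y)$ on $X^2$. Measurability, symmetry, the triangle inequality, and finiteness of $\int \omega\, d\mu^2 = \int \hat\rho\, d\hat\mu^2$ are automatic, and the intertwining relation immediately yields $T_{av}^n \omega = \pi^{*}\bigl(\hat T_{av}^n \hat\rho\bigr)$.

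The core step I expect to carry out is the comparison
\[
\mbbH_\eps\lr{\hat X, \hat\mu, \hat T_{av}^n \hat\rho} \le \mbbH_\eps\lr{X, \mu, T_{av}^n \omega}.
\]
Given an optimal partition $X_0, X_1, \ldots, X_k$ of $X$ with $\mu(X_0) < \eps$ and $\diam_{T_{av}^n \omega}(X_i) < \eps$ for $i>0$, I would push it down to $\hat X$ by $\hat Y_i := \pi(X_i)$, set $\hat Z_0 := \hat X \setminus \bigcup_{i > 0}\hat Y_i$, and disjointify the rest as $\hat Z_i := \hat Y_i \setminus \bigcup_{0 < j < i}\hat Y_j$. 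Since $\pi^{-1}(\hat Z_0) \subseteq X_0$, one obtains $\hat\mu(\hat Z_0) \le \mu(X_0) < \eps$; and since $T_{av}^n\omega$ only depends on the image under $\pi$, the identity $\diam_{\hat T_{av}^n\hat\rho}(\hat Y_i) = \diam_{T_{av}^n\omega}(X_i)$ gives the required bound on the cells $\hat Z_i \subseteq \hat Y_i$. This simultaneously proves the comparison and shows that $\omega$ itself has finite $\eps$-entropies, i.e., is admissible.

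Once the comparison is in hand, I would apply the theorem of Zatitskiy on $(X,\mu,T)$ to the admissible semimetric $\omega$ and any fixed admissible generating semimetric $\rho$ on $X$ with finite integral: for each $\eps > 0$ there exist $c > 0$ and $\del > 0$ such that
\[
\mbbH_\eps\lr{X, \mu, T_{av}^n \omega} \le c\, \mbbH_\del\lr{X, \mu, T_{av}^n \rho}.
\]
Chaining the two inequalities gives $\mbbH_\eps(\hat X, \hat\mu, \hat T_{av}^n \hat\rho) \lesssim \mbbH_\del(X, \mu, T_{av}^n \rho)$, which is exactly the relation $\mathcal{H}(\hat X, \hat\mu, \hat T) \preceq \mathcal{H}(X, \mu, T)$ in the sense of Definition \ref{defRel}.

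The only genuine point of delicacy will be the measurability of the projected sets $\pi(X_i)$: images of measurable sets under factor maps of standard probability spaces are in general analytic rather than Borel, so the disjointification has to be carried out with respect to the completion of $\hat\mu$. This is standard in the theory, but it is the one place where one cannot simply quote the definitions verbatim.
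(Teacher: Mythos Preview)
Your argument is correct and is precisely the intended derivation: the paper states this result as an immediate corollary of Zatitskiy's theorem without spelling out a proof, and the natural reading is exactly your pullback-of-semimetric argument followed by an application of that theorem. Your flag about the measurability of $\pi(X_i)$ is apt but harmless here, since in standard (Lebesgue--Rokhlin) probability spaces analytic images are automatically measurable with respect to the completed $\sigma$-algebra.
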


\section{Subadditivity}\label{secSub}
In this section, we prove that the scaling entropy  of a system is increasing and subadditive with respect to $n$. Conversely, all increasing {in $n$} subadditive functions (decreasing in $\eps$) can be obtained as a scaling entropy of some automorphisms.
\begin{defn}
    We call a function $\Phi(n,\eps)$ \emph{subadditive} if for all $\eps > 0$, and for any $k,m \in \mbbN$
    \[
    \Phi(k+m, \eps) \le \Phi(k,\eps) + \Phi(m, \eps).
    \]
\end{defn}

\begin{thm}\label{subthm}
    Let $\lr{X, \mu, T}$ be a measure preserving system. Then there exists an increasing in $n$ and decreasing in $\eps$ subadditive function $\Phi(n, \eps)$ such that
    \[
    \Phi \in \mathcal{H}\lr{X, \mu, T}.
    \]
\end{thm}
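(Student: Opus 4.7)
The plan is to construct a subadditive function $\Phi \in \mathcal{H}(X,\mu,T)$ by regularizing the natural two-variable function $\Psi(n,\eps) = \mbbH_\eps(X, \mu, T_{av}^n\rho)$ attached to an admissible generating semimetric $\rho$ of finite integral.

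First I establish a refinement inequality of quasi-subadditive type: for any decomposition $n = n_1 + \ldots + n_k$,
\[
\Psi(n, k\eps) \le \sum_{i=1}^k \Psi(n_i,\eps).
\]
This is obtained exactly as in the proof of Lemma~\ref{lmEx}: take $(\eps,\eps)$-good partitions for each $T_{av}^{n_i}\rho$, pull them back by $T^{n_1 + \ldots + n_{i-1}}$, and take their common refinement. The resulting partition has error set of measure at most $k\eps$ (union bound across $k$ error sets) and $T_{av}^n\rho$-diameter less than $\eps$, since $T_{av}^n\rho$ is a convex combination of the shifted $T_{av}^{n_i}\rho$'s with weights $n_i/n$.

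Next I take the subadditive regularization in $n$,
\[
\Phi_0(n,\eps) = \inf\Bigl\{\sum_{i=1}^k \Psi(n_i,\eps) : k \ge 1,\ n = n_1 + \ldots + n_k\Bigr\},
\]
which is manifestly subadditive in $n$ for each $\eps$ (just concatenate decompositions) and inherits monotonicity in $\eps$ from $\Psi$. To additionally enforce nondecrease in $n$, I set $\Phi(n,\eps) = \max_{n' \le n}\Phi_0(n',\eps)$; splitting $n'' \le n+m$ as $n'_1 + n'_2$ with $n'_1 \le n$, $n'_2 \le m$ shows that this operation preserves subadditivity.

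It remains to verify $\Phi \sim \Psi$ in the sense of Definition~\ref{defRel}. The direction $\Phi \preceq \Psi$ is immediate from $\Phi_0 \le \Psi$ (take $k = 1$). The reverse direction $\Psi \preceq \Phi$ is the main technical obstacle. For a given $\eps$ and a sufficiently small $\delta = \delta(\eps)$, the refinement inequality controls decompositions with few parts: if $k \le \eps/\delta$ then $\sum_i \Psi(n_i,\delta) \ge \Psi(n,k\delta) \ge \Psi(n,\eps)$. Decompositions with many parts $k > \eps/\delta$ require an auxiliary lower bound, such as $\sum_i \Psi(n_i,\delta) \ge k\Psi(1,\delta)$ (which becomes available after replacing $\Psi$ by its nondecreasing-in-$n$ regularization); with $\delta$ chosen so that $(\eps/\delta)\Psi(1,\delta)$ is large compared to $\Psi(n,\eps)$ on the relevant range, both regimes dominate $\Psi(n,\eps)$, giving the desired inequality. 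Balancing these two regimes uniformly in $n$ is the crux of the argument, and mirrors the subadditivity proof of Petrov and Zatitskiy \cite{PZ} for the stable case, now carried out within the present two-parameter framework.
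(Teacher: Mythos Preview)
There is a genuine gap in the direction $\Psi \preceq \Phi$. In your ``many parts'' regime $k > \eps/\delta$, the only lower bound you produce is $\sum_i \Psi(n_i,\delta) \ge k\,\Psi(1,\delta) > (\eps/\delta)\,\Psi(1,\delta)$. This quantity is a constant depending only on $\eps$ and $\delta$, while $\Psi(n,\eps)$ is in general unbounded in $n$; no choice of $\delta$ and $C$ can make $(\eps/\delta)\Psi(1,\delta) \ge C^{-1}\Psi(n,\eps)$ hold for all $n$. Concretely, the all-ones decomposition $n = 1 + \cdots + 1$ already gives $\Phi_0(n,\delta) \le n\Psi(1,\delta)$, and to compare this with $\Psi(n,\eps)$ you need to know $\Psi(n,\eps) \lesssim n\Psi(1,\delta)$ with constants \emph{uniform in $n$}---which is precisely a $k$-independent subadditivity estimate, not derivable from your refinement inequality. (A smaller issue: $\Phi \preceq \Psi$ is not immediate either, since $\Phi = \max_{n'\le n}\Phi_0(n',\cdot) \ge \Phi_0$; you need quasi-monotonicity of $\Psi$ in $n$ at a cost in $\eps$.)

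The root of the problem is that your refinement inequality $\Psi(n,k\eps) \le \sum_i \Psi(n_i,\eps)$ loses a factor $k$ in the $\eps$-argument, so once $k$ is unbounded the comparison collapses. The paper instead invokes the Petrov--Zatitskiy lemma, which for equal blocks gives $\Psi(kn,\eps) \le 2k\,\Psi(n,\eps^2/4)$ with an $\eps$-shift \emph{independent of $k$}, together with the companion inequality $\Psi(n,\eps) \ge \Psi(k,2\sqrt{2\eps})$ for $k \le n$. These two bounds feed into a real-analysis lemma (given $\phi(kn)\le k\psi(n)$ and $\phi(n)\ge\eta(k)$ for $k\le n$, one builds an increasing subadditive $\theta$ with $\eta\le\theta\le 2\psi$) that produces, for each fixed $\eps$, a subadditive increasing $\Theta(\cdot,\eps)$ sandwiched between $\Psi(\cdot,2\sqrt{2\eps})$ and $4\Psi(\cdot,\eps^2/4)$; monotonicity in $\eps$ is then arranged a posteriori along a sequence $\eps_k\to 0$. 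Your subadditive-regularization idea is natural, but making it go through would still require the $k$-independent Petrov--Zatitskiy estimate, at which point the paper's direct construction is shorter.
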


\begin{proof}
    We will use the following estimates proved in~\cite{PZ}. 
    \begin{lm}[Petrov, Zatitskiy, 2015]\label{lm_PZ}
        Let $\rho_1, \ldots, \rho_k$ be admissible semimetrics on the measure space~$\lr{X,\mu}$, and $\rho_i \le 1$ for all $i \le k$.
        \begin{enumerate}
            \item
            Suppose that {$\eps>0$ and} $\mbbH_\eps\lr{X, \mu, \rho_i} > 0$ {for all $i\le k$}. Then 
            \[
            \mbbH_{2\sqrt{\eps}}\lr{X,\mu, \frac{1}{k} \suml_{i = 1}^k \rho_i} \le
            2\suml_{i = 1}^k\mbbH_\eps(X,\mu, \rho_i).
            \]
            
            \item There exists some $m \le k$ such that
            \[
            \mbbH_{2\sqrt{\eps}}\lr{X,\mu, \rho_m} \le 
            \mbbH_{\eps}\lr{X,\mu, \frac{1}{k} \suml_{i = 1}^k \rho_i}.
            \]
        \end{enumerate} 
        
    \end{lm}
    
    Now, let $\rho \le 1$ be an admissible generating semimetric. Denote $\Psi(m, \eps) = \mbbH_\eps\lr{X, \mu, T_{av}^m \rho}$. Let us prove the following.
    \begin{prop}\label{prop1}
        The following inequalities hold.
        \begin{enumerate}
        \item For all $k, n \in \mbbN$ and positive $\eps$ small enough
            \[
            \Psi(kn,\eps) \le 2k \Psi\lr{n, \frac{\eps^2}{4}}.
            \]
        \item For all $k, n \in \mbbN$ with $k \le n$ one has 
            \[
            \Psi(n,\eps) \ge  \Psi\lr{k, 2\sqrt{2\eps}}.
            \]
            
         \end{enumerate}
    \end{prop}
    \begin{proof}
        Let us prove the first inequality. Note that for $\eps < \frac{1}{3} \intl_{X^2}\rho$ the $\eps$--entropy of $\rho$ is positive. Also, any averaged semimetric $T_{av}^m\rho$ has the same integral as the initial one, therefore, its $\eps$--entropy is positive too. Let us fix such $\eps > 0$ and some $k, n \in \mbbN$. Define for $i \le k$
        \[
        \rho_i = T^{(i-1)n}\ T_{av}^n \rho.
        \]
        Clearly, 
        \[
        T_{av}^{kn}\rho = \frac{1}{k}\suml_{i = 1}^k\rho_i.
        \]
        Applying lemma \ref{lm_PZ}, we obtain:
        \[
        \mbbH_\eps\lr{X, \mu, T_{av}^{kn}{\rho}} \le 2\suml_{i = 1}^k \mbbH_{\frac{\eps^2}{4}}\lr{X, \mu, \rho_i} =
        2k \mbbH_{\frac{\eps^2}{4}}\lr{X, \mu, T_{av}^n\rho}.
        \]
        The first part is proved.
        
        Let us proceed to the second inequality. Let $n = km + r$, where $r < k$. Note that $r \le \frac{n}{2}$. Then
        \[
        T_{av}^n \rho \ge \frac{1}{n} \suml_{i = 0}^{km} T^i\rho \ge 
        \frac{n-r}{n} \frac{1}{km}\suml_{i = 0}^{km} T^i\rho \ge 
        \frac{1}{2}T_{av}^{km}\rho.
        \]
        Thus,
        \[
        \mbbH_\eps\lr{X, \mu, T_{av}^n\rho} \ge \mbbH_\eps\lr{X, \mu, \frac{1}{2}T_{av}^{km}\rho} \ge 
        \mbbH_{2\eps}\lr{X, \mu, T_{av}^{km}\rho}.
        \]
        Now apply the second part of lemma \ref{lm_PZ} for semimetrics $\rho_i = T^{(i-1)n}\ T_{av}^n \rho$. We obtain
        \[
        \mbbH_\eps\lr{X, \mu, T_{av}^n\rho} \ge \mbbH_{2\eps}\lr{X, \mu, T_{av}^{km}\rho} \ge 
        \mbbH_{2\sqrt{2\eps}}\lr{X, \mu, T_{av}^{k}\rho}.
        \]
    \end{proof}
    
    \begin{lm}\label{sublm}
        Let $\eta(n), \phi(n),$ and $\psi(n),$ $n \in \mathbb{N},$ be sequences of non-negative real numbers. Suppose that  
        \begin{equation}
        \label{eq1}
        \phi(kn) \le k \psi(n) \quad \text{for } k, n \in \mbbN,
        \end{equation}
        and  
        \begin{equation}
        \label{eq2}
        \phi(n) \ge \eta(k) \quad \text{for } k \le n.
        \end{equation}
        Then, there exists an increasing subadditive function $\theta(n)$ such that
        \[
        \eta(n)\le\theta(n) \le 2\psi(n).
        \]
    \end{lm}
    \begin{proof}
        Let 
        \[
        \hat \phi(n)  = \infl_{m \ge n} \phi(m) \le \phi(n).
        \]
        It is clear that $\hat \phi$ is increasing and by~\eqref{eq2} for any $k \le n$ 
        \[
        \hat \phi(n) \ge \eta(k).
        \]
        Also, by~\eqref{eq1} for all $k, n \in \mbbN$
        \[
        \hat \phi(kn)  \le \phi(kn) \le k \psi(n).
        \]
        Define $\hat \theta$ as follows:
        \[
        \hat\theta(n) = \supl_{k > 0} \frac{\hat\phi(kn)}{k} \le \psi(n).
        \]
        Obviously, $\hat\theta(n) \ge \hat\phi(n)$. Note that $\hat \theta$ increases because $\hat\phi$ increases, and
        \[
        \hat\theta(kn) = \supl_{m > 0}\frac{\hat\phi(mkn)}{m} = k\supl_{m > 0}\frac{\hat\phi(mkn)}{mk} \le k \supl_{l>0}\frac{\hat\phi(ln)}{l} = k\hat\theta(n).
        \]
        Now, define $\theta$:
        \[
        \theta(n) = n \supl_{m \ge n} \frac{\hat\theta(m)}{m} \ge \hat\theta(n) \ge \hat\phi(n) \ge \eta(n).
        \]
        Firstly, $\theta$ is increasing. Indeed, 
        \[
        \theta(n) = \max\lr{\hat\theta(n), n\supl_{m \ge n+1} \frac{\hat\theta(m)}{m}} \le 
        \max \lr{\hat\theta(n+1), (n+1)\supl_{m \ge n+1}\frac{\hat\theta(m)}{m}} = \theta(n+1).
        \]
        Secondly, $\theta$ is subadditive:
        \[
        \theta(k + n) = (k + n)\supl_{m \ge k+n}\frac{\hat\theta(m)}{m} \le  
        k\supl_{m \ge k}\frac{\hat\theta(m)}{m} + n\supl_{m \ge n}\frac{\hat\theta(m)}{m} = \theta(k) + \theta(n).
        \]
        It only remains to show that $\theta(n) \le 2 \psi(n)$. 
        \[
        \theta(n) = n \supl_{m \ge n} \frac{\hat\theta(m)}{m} \le
        n \supl_{m \ge n} \frac{\hat\theta\lr{n\wpart{\frac{m}{n}}+ n}}{m} \le
        \supl_{m \ge n} \frac{n\wpart{\frac{m}{n}}+n}{m} \ \hat\theta(n) \le 2\psi(n).
        \]
    \end{proof}
    Now let us complete the proof of the theorem. {For a fixed $\eps>0$  we use  Lemma~\ref{sublm} with $\eta(n) = \Psi\lr{n, 2\sqrt{2\eps}},$ $\phi(n) = \Psi(n,\eps)$, and $\psi(n)= 2\Psi\lr{n, \frac{\eps^2}{4}}$, the conditions~\eqref{eq1} and~\eqref{eq2} of Lemma~\ref{sublm} are guaranteed by Proposition~\ref{prop1}. We obtain an} increasing subadditive function $\Theta(n, \eps)$ such that
    \begin{equation}\label{thm_sub_eq1}
        \Psi\lr{n, 2\sqrt{2\eps}}\le \Theta(n, \eps) \le 4\Psi\lr{n, \frac{\eps^2}{4}}.
    \end{equation}
    {It only remains to get a decrease in $\eps$. To do that let us find a sequence $\{\eps_k\}_{k=1}^\infty$ such that $\Theta(n, \eps_k) < 4\Theta(n, \eps_{k+1})$ for all $k, n \in \mbbN$ and $\eps_k$ tends to 0. It is always possible due to inequality \ref{thm_sub_eq1}. For $\eps > 0$ denote by $\gamma(\eps)$ the smallest $k$ such that $\eps_k < \eps$. Put then
    \[
    \Phi(n, \eps) = 4^k \Theta\lr{n, \eps_{\gamma(\eps)}}.
    \]
    It is easy to see that ${\Phi\sim}\Theta \sim \Psi$, therefore $\Phi \in \mathcal{H}\lr{X, \mu, T}$, as desired.}
\end{proof}

The next theorem gives the opposite result. It completes the description of the possible values of the invariant. 
\begin{thm}\label{thmSubEx}
    Let $\Phi(n,\eps)$ be an increasing in $n$ and decreasing in $\eps$ subadditive function of two variables. Then there exists a measure preserving system $(X, \mu, T)$ such that
    \[
        \Phi \in \mathcal{H}\lr{X, \mu, T}.
    \]
\end{thm}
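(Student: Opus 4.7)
The plan is to adapt the product construction from Section~\ref{secEx} by replacing the strictly increasing family $\phi_m\lnsim\phi_{m+1}$ used there with the cross-sections of the given $\Phi$ itself. I fix a decreasing sequence $\eps_k\downarrow 0$, say $\eps_k=2^{-k}$, and put $\phi_k(n)=\Phi(n,\eps_k)$. Each $\phi_k$ is increasing in $n$ and subadditive because $\Phi$ is, and $\phi_k$ is pointwise nondecreasing in $k$ because $\Phi$ decreases in $\eps$. By the classification theorem of~\cite{Z2} invoked in Section~\ref{secEx}, for each $k$ there exists an ergodic system $S_k=(X_k,\mu_k,T_k)$ with an admissible generating semimetric $\rho_k\le 1$ such that $\mbbH_\eps(X_k,\mu_k,(T_k)_{av}^n\rho_k)\asymp\phi_k(n)$ for all sufficiently small $\eps>0$.

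I then assemble the product exactly as in Section~\ref{secEx}: set $\systemU=(\prod_k X_k,\mu,\prod_k T_k)$ with $\mu$ an ergodic invariant measure produced by the ergodic decomposition theorem whose coordinate marginals are the $\mu_k$, and equip the product space with $\rho=\sum_k 2^{-k}\rho_k$, which is admissible and generating. It remains to prove that $\mathcal{H}(\systemU)=[\Phi]$.

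For the upper bound, Lemma~\ref{lmEx} yields for every $\eps>0$
\[
\mbbH_\eps(\systemU,T_{av}^n\rho)\le\sum_{k=1}^{R(\eps)}\mbbH_{\eps/(2R(\eps))}\bigl(X_k,\mu_k,(T_k)_{av}^n\rho_k\bigr)\le C(\eps)\,\phi_{R(\eps)}(n),
\]
where $C(\eps)$ is a constant depending only on $\eps$; the second inequality uses $\phi_k\le\phi_{R(\eps)}$ for $k\le R(\eps)$ together with the realization for each $S_k$. Since $\phi_{R(\eps)}(n)=\Phi(n,2^{-R(\eps)})=\Phi(n,\eps)$, choosing $\delta=\eps$ in Definition~\ref{defRel} gives $[\mbbH_\eps(\systemU)]\preceq[\Phi]$. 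For the lower bound I fix $\eps>0$, pick $k$ with $\eps_k\le\eps$ so that $\Phi(n,\eps)\le\phi_k(n)$, and use that $S_k$ is a factor of $\systemU$ via coordinate projection. Corollary~\ref{remFactor}, applied at a scale $\eps'$ below the realization threshold of $S_k$, produces $\delta>0$ with $\phi_k(n)\asymp\mbbH_{\eps'}(S_k,(T_k)_{av}^n\rho_k)\lesssim\mbbH_\delta(\systemU,T_{av}^n\rho)$, and hence $\Phi(n,\eps)\lesssim\mbbH_\delta(\systemU)$, giving $[\Phi]\preceq[\mbbH_\eps(\systemU)]$.

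The main technical obstacle is the bookkeeping in the upper bound: for each fixed $\eps$ one must check that the common scale $\eps/(2R(\eps))$ lies below the realization threshold $\eps_0^{(k)}$ of every factor $S_k$ with $k\le R(\eps)$, and that the resulting $R(\eps)$ constants can be absorbed into a single $C(\eps)$. Both are automatic because the monotonicity of $\mbbH$ in the scale lets one replace $\eps/(2R(\eps))$ by its minimum with each $\eps_0^{(k)}/2$ without loss, and for each fixed $\eps$ only finitely many $k$ enter the sum. The other mild point, the existence of an ergodic invariant measure on the infinite product with the prescribed coordinate marginals, is handled exactly as in Section~\ref{secEx}.
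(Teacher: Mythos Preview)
Your proof is correct and follows essentially the same route as the paper: define $\phi_m(n)=\Phi(n,\eps_m)$ for a sequence $\eps_m\downarrow 0$, build the product system $\systemU$ of Section~\ref{secEx} from the stable realizations of the $\phi_m$, and then verify $\mathcal{H}(\systemU)=[\Phi]$ using Lemma~\ref{lmEx} for the upper bound and the factor inequality (Corollary~\ref{remFactor}) for the lower bound. The only cosmetic difference is your choice $\eps_k=2^{-k}$ versus the paper's $\eps_m=1/m$; your extra bookkeeping about realization thresholds is in fact unnecessary for the upper bound, since monotonicity of $\mbbH_\eps$ in $\eps$ already gives $\mbbH_{\eps/(2R(\eps))}(X_k,\ldots)\lesssim\phi_k(n)$ for every $k$, regardless of whether $\eps/(2R(\eps))$ lies below the threshold.
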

\begin{proof}
    We will use the construction of the unstable system described in section~\ref{secEx}. 
    Let 
    \[
    \phi_m(n) = \Phi\lr{n, \frac{1}{m}}.
    \]
    Note that $\phi_m(\cdot)$ is increasing and subadditive. Denote $\mathcal{A} = \left\{ \phi_m\right\}_{m=1}^\infty$. Let us construct the system~$\systemU$ and show that $\Phi \in \mathcal{H}\lr{\systemU}$. Lemma~\ref{lmEx} gives the upper bound {for any $\eps>0$ fixed}:
    \[
        \mbbH_\eps(\systemU, T_{av}^n\rho) \le \suml_{k=1}^{R(\eps)} \mbbH_{\frac{\eps}{2R(\eps)}}(X_k, \mu_k, \left(T_k\right)_{av}^n\rho_k) \lesssim \phi_{R(\eps)}(n).
    \]
    Therefore,
    \[
    \mathcal{H}\lr{\systemU} \preceq \Phi.
    \]
    However, $\systemU$ has a stable factor $(A_m, \mu_m, T_m)$. Then for all $m \ge 1$
    \[
    \mathcal{H}\lr{\systemU}\succeq \phi_m(n).
    \]
    Thus,  $\mathcal{H}\lr{\systemU} \ni \Phi$, as desired. 
\end{proof}

\section{On the minimality of the relation}
It might seem that the equivalence relation we defined is too strong. The question is in improving~$\delta$ in Definition~\ref{defRel}. Is it possible to control $\delta$ by some function of $\eps$? Note that the proof of subadditivity (Theorem~\ref{subthm}) requires  $\delta = 4\eps^2$ only. If there exists a weaker equivalence relation, the invariant can separate more systems. However, the following theorem claims that our relation is sharp in this sense.

\begin{thm}\label{minthm}
    Let $f\colon (0,1) \to (0,1)$ be an increasing function. Then there exist a measure preserving system $\lr{X,\mu, T}$ and two admissible generating semimetrics $\rho$ and $\omega$ on $X$ such that for any $\eps_0 > 0$ there is an $0 < \eps < \eps_0$ satisfying
    \begin{equation}\label{mineq}
		\mbbH_{\eps}(X, \mu, T_{av}^n \omega) \gnsim \mbbH_{f(\eps)} (X, \mu, T_{av}^n \rho).
	\end{equation}
\end{thm}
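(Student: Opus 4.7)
The plan is to reuse the product construction from Section~\ref{secEx}: take $(X, \mu, T) = \systemU = \prod_{m \ge 1} S_m$ with $S_m$ having scaling entropy $\phi_m$, where the family $\{\phi_m\}$ grows fast enough that $\phi_{m-1} \lnsim \phi_m$ and $\sum_{j \le m} C_j \phi_j \lesssim \phi_m$ (the $C_j$ being the stability constants of $S_j$ with respect to a fixed admissible generating $\rho_j \le 1$). Let $\eps_m^* \in (0,1)$ be a stability threshold of $\rho_m$, so that $\mbbH_\eps(S_m, (T_m)^n_{av}\rho_m) \asymp \phi_m(n)$ for all $\eps < \eps_m^*$.

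The two semimetrics will be weighted sums
\[
\omega = \sum_{m \ge 1} a_m \rho_m, \qquad \rho = \sum_{m \ge 1} b_m \rho_m,
\]
with weights tailored to $f$. Fix a sequence of scales $\eps_k \searrow 0$ and put $a_k = 2\eps_k/(\eps_k^*)^2$ and $b_k = f(\eps_k)/4^k$; by taking $\eps_k$ small enough we may ensure $(a_k)$ is summable (the sequence $(b_k)$ is automatically summable and decreases geometrically). Both $\omega$ and $\rho$ are then admissible generating semimetrics on $\systemU$, and the geometric decay yields the tail bound $\sum_{m \ge k} b_m < f(\eps_k)/2$.

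For the lower bound, $\omega(x,y) \ge a_k \rho_k(\pi_k x, \pi_k y)$ on $\systemU$, so via a standard disintegration argument (which costs a square root in the scale)
\[
\mbbH_{\eps_k}(X,\mu,T^n_{av}\omega) \gtrsim \mbbH_{\sqrt{\eps_k/a_k}}(S_k,\mu_k,(T_k)^n_{av}\rho_k) = \mbbH_{\eps_k^*/\sqrt{2}}(S_k,\mu_k,(T_k)^n_{av}\rho_k) \gtrsim \phi_k(n).
\]
For the upper bound, a direct analogue of Lemma~\ref{lmEx} with $2^{-m}$ replaced by $b_m$ truncates at $R(f(\eps_k)) = k-1$, giving
\[
\mbbH_{f(\eps_k)}(X,\mu,T^n_{av}\rho) \le \sum_{m=1}^{k-1} \mbbH_{\delta_m}(S_m,(T_m)^n_{av}\rho_m) \lesssim \sum_{m=1}^{k-1} C_m \phi_m(n) \lesssim \phi_{k-1}(n),
\]
with $\delta_m$ proportional to $f(\eps_k)/(k-1)$. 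Combining both estimates with $\phi_{k-1} \lnsim \phi_k$ gives $\mbbH_{\eps_k}(T^n_{av}\omega) \gnsim \mbbH_{f(\eps_k)}(T^n_{av}\rho)$ for every $k$, which is~\eqref{mineq} at $\eps = \eps_k$, and since $\eps_k \to 0$, this witnesses the required $\eps$ below any prescribed $\eps_0$.

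The main obstacle is the bookkeeping of $\eps$-dependent constants: the projection from $\systemU$ onto the factor $S_k$ loses a square root in the scale (through a Markov-type estimate for the error set after disintegration of $\mu$ along $\pi_k$), and the stability constants $C_m$ for the factors $S_m$ must not blow up when summed over $m \le k-1$. Both are handled by choosing the $\phi_m$ to grow fast enough (so that $\sum_{j \le m} C_j \phi_j \lesssim \phi_m$) and by tuning the $\eps_k$ so that $\sqrt{\eps_k/a_k} < \eps_k^*$ and the truncation scales $\delta_m$ remain in the regime where each factor exhibits its limit behavior; no fundamentally new idea beyond the construction of Section~\ref{secEx} is required.
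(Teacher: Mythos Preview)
Your approach is correct and is essentially the paper's: build $\systemU$ and compare two weighted sums $\sum w_m\rho_m$ with the weights tuned to~$f$. The paper streamlines your argument in three places. It keeps $\rho$ in its standard form (weights $2^{-m}$), so Lemma~\ref{lmEx} applies verbatim for the upper bound and only $\omega=\sum C_m\rho_m$ receives custom coefficients. The lower bound needs no disintegration or square-root loss: since $\pi_m$ is measure-preserving and the coordinate semimetric is a pullback, one has $\mbbH_\eps(\systemU,\,C_m\rho_m\circ\pi_m)=\mbbH_\eps(X_m,\mu_m,C_m\rho_m)$ exactly, and then $\mbbH_\eps(X_m,C_m\rho_m)=\mbbH_{\eps/C_m}(X_m,\rho_m)$. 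Finally, instead of encoding the factor stability thresholds $\eps_m^*$ into the weights $a_m$, the paper uses a pigeonhole step (Proposition~\ref{minprop}) to pass from an uncountable family $\{\psi_\alpha\}_{\alpha\in(0,1)}$ to a countable subfamily whose thresholds all exceed a single~$\kappa$. Your extra growth hypothesis $\sum_{j\le m}C_j\phi_j\lesssim\phi_m$ is likewise unnecessary: for each fixed scale $\eps_k$ the implied constant in the upper bound may depend on~$k$.
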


\begin{proof}
    Let us show the following proposition first.
    \begin{prop}\label{minprop}
    Let $h\colon (0,1) \to (0,1)$ be some map. Then there exists $\kappa \in (0,1)$ such that there is an increasing sequence in $h^{-1}(\kappa,1)$.
    \end{prop}
    The proof of the proposition is clear. Indeed, any subset of the real line without an infinite increasing sequence is countable.
    
    Now let $\{\psi_\alpha(n) \},\ {\alpha\in(0,1)}$ be a family of increasing subadditive functions such that 
    $\psi_{\alpha_1}(n)\lnsim \psi_{\alpha_2}(n)$ for any ${\alpha_1 < \alpha_2}$. One, for example, can use $\psi_\alpha(n) = \log^{1+\alpha}(n)$. Consider the corresponding stable measure preserving systems $\lr{Y_{\alpha}, \nu_{\alpha}, R_{\alpha}}$ and admissible generating semimetrics $\tau_\alpha \le 1$ such that
    for any positive $\eps < h(\alpha)$
    \[
	\mbbH_{\eps}(Y_\alpha, \nu_\alpha, (R_\alpha)_{av}^n \tau_\alpha) \asymp \psi_\alpha(n).
	\]
    Via Proposition~\ref{minprop}, we obtain some $\kappa > 0$ and an increasing sequence $\{\alpha_m\}_{m = 1}^{\infty}$ such that $h(\alpha_m) > \kappa$ for all $m > 0$. Let us define $\phi_m = \psi_{\alpha_m}$ and let $\mathcal{A}$ be a family of $\phi_m$.
    Now let us construct $\systemU$ {as in section~\ref{secEx}} using $\lr{Y_{\alpha_m}, \nu_{\alpha_m}, R_{\alpha_m}}= \lr{X_m, \mu_m, T_m}$ as coordinate factors. 
    
    Define $\rho$ as a standard semimetric on $\systemU$:
    \[
    \rho(x,y) = \suml_{m=1}^{\infty} \frac{1}{2^m}\rho_m(x_m,y_m),
    \]
    where $\rho_m = \tau_{\alpha_m}$. We will look for $\omega$ in the similar linear form:
    \[
	    \omega(x, y) = 
		\suml_{m=1}^{\infty} C_m \rho_m(x_m, y_m),
	\]
	where $1 > C_m > 0$ and $\sum_{m=1}^{\infty} C_m$ is finite. Note that any semimetric of such a type is always generating and admissible.  
	
	For all $m, n \ge 1$
	\[
	\mbbH_{\eps}\lr{\systemU, T_{av}^n \omega} \ge \mbbH_{\eps}\lr{\systemU, C_m T_{av}^n \rho_m} = \mbbH_\eps\lr{X_m, \mu_m, C_m\lr{T_m}_{av}^n}.
	\]
	Using that $C_m \le 1$ we obtain:
	\[
	\mbbH_{\eps}(X_m, C_m T_{av}^n\rho_m) \ge \mbbH_{\frac{\eps}{C_m}}(X_m, T_{av}^n\rho_m) \asymp \phi_m(n),
	\]
	while $\frac{\eps}{C_m} \le \kappa$.  Therefore,
	\[
	\mbbH_{\eps}(\systemU, T_{av}^n\omega) \gtrsim \phi_m(n),
	\]
	for all $m$ such that $C_m \ge \kappa^{-1}\eps$. Let $K(\eps)$ be the largest such $m$.
	
	However, Lemma~\ref{lmEx} gives an upper estimate of the $\eps$--entropy of $\rho$
	\[
	\mbbH_{\eps}(\systemU, T_{av}^n\rho) \lesssim \phi_{R(\eps)}(n),
	\]
	where $R(\eps) = -\log \eps$. Hence,
	\[
	\mbbH_{f(\eps)}(\systemU, T_{av}^n\rho) \lesssim \phi_{R(f(\eps))}(n).
	\]
	Inequality~\ref{mineq} holds for every $\eps$ such that
	\[
	K(\eps) > R(f(\eps)). 
	\]
	And this is true when
	\[
	C_{R(f(\eps))+1} > \kappa^{-1}\eps.  
	\]
	Now let $\eps_p = \kappa 2^{-p}$ for $p \in \mbbN$ and put $C_{R(f(\eps_p))+1} = 2\kappa^{-1}\eps_p \le 1$. If some $C_m$ is not defined yet put $C_m = 2^{-m}$. Clearly, obtained semimetric $\omega$ is desired. 
\end{proof}


\begin{thebibliography}{100}
    
    \bibitem{V1} A. Vershik, \emph{Dynamics of metrics in measure spaces and their asymptotic invariants}, Markov Processes and Related Fields, 16:1, 169--185,  2010.
    
    \bibitem{V3} A. Vershik, \emph{Scailing entropy and automorphisms with pure point spectrum}, St. Petersburg Math. J., 23:1, 75--91, 2012.
			
    \bibitem{VPZ} A. Vershik, P. Zatitskiy, F. Petrov, \emph{Geometry and dynamics of admissible metrics in measure spaces}, Open Mathematics, 11 (3), 379--400, 2013.
    
    \bibitem{Z1} P. Zatitskiy, \emph{Scaling entropy sequence: invariance and examples}, Journal of Mathematical Sciences, 209:6, 890--909, 2015.
			
	\bibitem{PZ} P. Zatitskiy, F. Petrov, \emph{On the subadditivity of a scaling entropy sequence}, Journal of Mathematical Sciences, 215:6, 734--737,  2016.
			
	\bibitem{Z2} P. Zatitskiy, \emph{On the possible growth rate of a scaling entropy sequence}, Journal of Mathematical Sciences, 215:6, 715--733, 2016.
	
	\bibitem{F} S. Ferenczi, \emph{Measure-theoretic complexity of ergodic systems}, Israel Journal of Mathematics 100, 187--207,  1997.	
	
	\bibitem{KT} A. Katok and J.-P. Thouvenot, \emph{Slow entropy type invariants and smooth realization of commuting measure-preserving transformations}, Annales de Institut Henri Poincare 33, 323--338,  1997.
	
\end{thebibliography}
\end{document}